
\documentclass[10pt,twoside]{siamltex}
\usepackage{amsfonts,epsfig}

\setlength{\textheight}{190mm}
\setlength{\textwidth}{130mm}
\topmargin = 20mm


\setlength{\parskip}{.1in}



\newtheorem{conjecture}[theorem]{Conjecture}



\begin{document}



\bibliographystyle{plain}
\title{Extremal graphs for the sum of the two largest signless Laplacian eigenvalues \thanks{Received by the editors on Month x, 200x.
Accepted for publication on Month y, 200y   Handling Editor: .}}

\author{
Carla Silva Oliveira\thanks{Escola Nacional de Ci\^encias Estat\'isticas (ENCE/IBGE), Rio de Janeiro, Brasil (carla.oliveira@ibge.gov.br). Partially supported by grant 305454/2012-9, CNPq, Brazil.}
\and
Leonardo de Lima\thanks{Centro Federal de Educa\c c\~ao Tecnol\'ogica Celso Suckow da Fonseca (CEFET-RJ), Rio de Janeiro, Brasil (leolima.geos@gmail.com). Partially supported by grant 305867/2012-1, CNPq, Brazil}
\and
Paula Rama\thanks{Center for Research and Development in Mathematics and Apllications, Department of Mathematics, University of Aveiro, Aveiro, Portugal (prama@mat.ua.pt), (paula.carvalho@ua.pt). Supported in part by \textit{FEDER} funds through \textit{COMPETE} - Operational Programme Factors of Competitiveness ("Programa Operacional Fatores de Competitividade") and by Portuguese funds through the \textit{Center for Research and Development in Mathema\-tics and Applications} and the Portuguese Foundation for Science and Technology ("FCT-Funda\c c\~ao  para Ci\^encia e a Tecnologia"), within project PEst-C/MAT/UI4106/2011 with COMPETE number FCOMP-01-0124-FEDER-022690.}
\and
Paula Carvalho\footnotemark[4]}
%


\pagestyle{myheadings}
\markboth{C. S. \ Oliveira, L.\ de Lima, P. \ Rama and P.\ Carvalho}{C. S. \ Oliveira, L.\ de Lima, P. \ Rama and P.\ Carvalho}
\maketitle

\begin{abstract}
Let $G$ be a simple graph on $n$ vertices and $e(G)$ edges. Consider $Q(G)=D+A$
as the signless Laplacian of $G$, where $A$ is the adjacency matrix and
$D$ is the diagonal matrix of the vertices degree of $G.$ Let $q_{1}(G)$ and $q_2(G)$ be the first and the second largest eigenvalues of $Q(G),$ respectively, and denote by $S_{n}ˆ{+}$ the star graph plus one edge. In this paper, we prove that inequality $q_1(G) + q_2(G) \leq e(G)+3$ is tighter for the graph $S_{n}^{+}$ among all firefly graphs and also tighter to $S_{n}^{+}$ than to the graphs $K_{k} \vee \overline{K}_{n-k}$ recently presented by Ashraf,  Omidi and Tayfeh-Rezaie. Also, we conjecture that the same inequality is tighter to $S_{n}^{+}$ than any other graph on $n$ vertices.
\end{abstract}

\begin{keywords}
Signless Laplacian; sum of eigenvalues; extremal graphs
\end{keywords}
\begin{AMS}
15A15, 15F10.
\end{AMS}

\section{Introduction} \label{intro-sec}

Given a simple graph $G$ with vertex set $V(G)$ and edge set $E(G)$, write $A$ for the adjacency matrix of $G$ and let $D$ be
the diagonal matrix of the row-sums of $A,$ i.e., the degrees of $G.$ The maximum degree of $G$ is denoted by $\Delta = \Delta(G).$ Let $e(G)=|E(G)|$ be the number of edges and let $n = |V(G)|$ be the number of vertices of $G$. The matrix $Q\left(  G\right)  =A+D$ is called the \emph{signless Laplacian} or the $Q$-matrix of $G$. As usual, we shall index the eigenvalues of $Q\left(  G\right)  $ in non-increasing order and denote them as $q_{1}\left(  G\right)  ,q_{2}\left( G\right)  ,\ldots,q_{n}\left(  G\right)$. Denote the star graph on $n$ vertices plus one edge by $S_{n}^{+},$ $\overline{G}$ as the complement graph of $G$ and $K_n$ as the complete graph on $n$ vertices. If $G_1 = (V_1, E_1)$ and $G_2 = (V_2, E_2)$ are graphs on disjoint sets of vertices, their graph sum is $G_1 + G_2 = (V_1 \cup V_2, E_1 \cup E_2).$ The join $G_1 \vee G_2$ of $G_1$ and $G_2$ is the graph obtained from $G_1 + G_2$ by adding new edges from each vertex in $G_1$ to every vertex of $G_2.$ Consider $M(G)$ as a matrix of a graph $G$ of order $n$ and let $k$ be a natural number such that $1 \leq k \leq n$. A general question related to $G$ and $M(G)$ can be raised: \emph{"How large can be the sum of the $k$ largest eigenvalues of $M(G)$ ?"} Usually, solving cases $k=1, n-1$ and $n$ are simple but the general case for any $k$ is not easy to be solved. The natural next case to be studied is $k=2$ and some work has been recently done in order to prove this case. For instance,  Ebrahimi \emph{et al.}, \cite{EMNA08}, for the adjacency matrix; Haemers \emph{et al.}, \cite{HMT10}, for the Laplacian matrix and Ashraf \emph{et al.}, \cite{AOT13}, for the signless Laplacian matrix. In particular, the latter denoted the sum of the two largest signless Laplacian by $S_2(G)$ and proved that
\begin{equation}
S_2(G) \leq e(G)+3, \label{ineq1}
\end{equation}
for any graph $G.$ Besides, they proved that the inequality (\ref{ineq1}) is asymptotically tight for graphs of the type $K_{k}\vee \overline{K}_{t}$, where $k+t=n.$ Given a graph $G$ with $e(G)$ edges, define the function $$f(G)=e(G)+3-S_2(G).$$ Since inequality (\ref{ineq1}) is asymptotically tight for the graphs $K_{k}\vee \overline{K}_{t}$, it means that $f(K_{k}\vee \overline{K}_{t})$ converges to zero when $n$ goes to infinity. In this paper, we proved the following facts: (A) the function $f(S_{n}^{+})$ converges to zero when $n$ goes to infinity and the graph $S_{n}^{+}$ is the only one such that inequality (\ref{ineq1}) is asymptotically tight within the firefly graphs; (B) the function $f(S_{n}^{+})$ converges to zero faster than $f(K_{k}\vee \overline{K}_{t}).$  Besides, based on computational experiments from AutoGraphiX \cite{CH00}, we conjecture that $f(S_{n}^{+})$ converges to zero faster than $f(G)$ when $G$ is any graph on $n$ vertices.


\section{Preliminaries}\label{section2}

In this section, we present some known results about $q_1(G)$, $q_2(G)$ and define some classes of graphs that will be useful to our purposes.

\begin{definition}
A firefly graph $F_{r,s,t}$ is a graph on $2r + s + 2t + 1$ vertices that consists of $r$ triangles, $s$
pendant edges and $t$ pendant paths of length $2$, all of them sharing a common vertex.
\end{definition}

Let $v$ be a vertex of $G$ and let $P_{q+1}$ and $P_{r+1}$ be two paths, say, $v_{q+1}v_{q}\ldots v_{2} v_{1}$ and $u_{r+1}u_{r}\ldots u_{2} u_{1}$.  The graph $G_{q,r}$ is obtained by vertex coalescence of $v_{q+1}$ and $u_{r+1}$ at the same vertex $v$ of $G.$

\begin{definition}[\cite{C10}, Grafting an edge]
Let $G_{q,r}$ be a graph. The graph $G_{q+1,r-1}$ is obtained from $G_{q,r}$ by removing the edge $(u_1,u_2)$ and placing the edge $(v_1,u_1).$
\end{definition}

If $G$ is connected with $e(G) = n+c-1,$ then $G$ is called a $c-$cyclic graph.

\begin{lemma}[\cite{M11}]
Suppose $c \geq 1$ and $G$ is a $c-$cyclic graph on $n$ vertices with $\Delta \leq n-3.$ If $n \geq 2c+5,$ then $q_1(G) \leq n-1.$
\label{teo_liu}
\end{lemma}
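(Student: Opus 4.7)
My plan is to argue by contradiction: assume $q_1(G) > n-1$ and derive a contradiction from the hypotheses $\Delta \le n-3$ and $n \ge 2c+5$. The starting point is the classical Merris bound $q_1(G) \le \max_{v \in V(G)} \bigl\{ d(v) + m(v) \bigr\}$, where $m(v) = \frac{1}{d(v)} \sum_{w \sim v} d(w)$, which yields a vertex $v^{\star}$ with $d(v^{\star}) + m(v^{\star}) > n - 1$, equivalently
$$
\sum_{w \sim v^{\star}} d(w) \;>\; d(v^{\star})\bigl(n-1-d(v^{\star})\bigr). \qquad (\star)
$$
I would then split the analysis on the value of $d(v^{\star})$.

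The easy case $d(v^{\star}) \in \{1,2\}$ is immediate: since each neighbour of $v^{\star}$ has degree at most $\Delta \le n-3$, one has $d(v^{\star}) + m(v^{\star}) \le d(v^{\star}) + (n-3) \le n-1$, directly contradicting the strict inequality $(\star)$.

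For the main case $3 \le d(v^{\star}) \le n-3$, I would invoke the connectedness of $G$. Every vertex outside $\{v^{\star}\}\cup N(v^{\star})$ has degree at least one, so those $n-1-d(v^{\star})$ vertices contribute at least $n-1-d(v^{\star})$ to the total degree sum. Combining this with the integer consequence of $(\star)$, namely $\sum_{w\sim v^{\star}} d(w) \ge d(v^{\star})(n-1-d(v^{\star}))+1$, and the identity $\sum_v d(v) = 2e(G) = 2(n+c-1)$ yields
$$
d(v^{\star})\bigl(n-1-d(v^{\star})\bigr) \;\le\; n + 2c - 2.
$$
Viewed as a concave quadratic in $d(v^{\star})$, the left-hand side attains its minimum on the integer interval $\{3,\ldots,n-3\}$ at the endpoint $d(v^{\star})=n-3$, with value $2(n-3)$; the strict inequality $2(n-3) > n+2c-2$ is equivalent to $n > 2c+4$, i.e.\ to the hypothesis $n \ge 2c+5$, so the desired contradiction follows for every admissible $d(v^{\star})$.

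The main technical subtlety is that the hypothesis $n \ge 2c+5$ is used precisely at the extreme $d(v^{\star})=n-3$, $n=2c+5$, where one must keep the inequality in $(\star)$ strict in order to exploit integrality. Should the Merris bound prove too loose in any corner case, the plan would be to replace it with the sharper edge-based version $q_1(G) \le \max_{uv\in E(G)} \tfrac{1}{2}\bigl(d(u)+d(v)+\sqrt{(d(u)-d(v))^2+4\,m(u)m(v)}\bigr)$ and carry out the same counting at a maximising edge.
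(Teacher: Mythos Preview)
The paper does not supply its own proof of this lemma; it is quoted verbatim from Liu~\cite{M11} as a preliminary tool, so there is nothing in the present paper to compare your argument against.

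That said, your argument is sound. The bound $q_1(G)\le \max_v\{d(v)+m(v)\}$ is valid for the signless Laplacian (conjugating $Q=D+A$ by $D$ gives a nonnegative matrix with row sums $d(v)+m(v)$, and connectedness guarantees $D$ is invertible), and your counting is correct: from $(\star)$ and integrality you get $\sum_{w\sim v^\star}d(w)\ge d(v^\star)(n-1-d(v^\star))+1$, and combining with $d(v^\star)+\sum_{w\sim v^\star}d(w)+\sum_{u\notin\{v^\star\}\cup N(v^\star)}d(u)=2(n+c-1)$ and the lower bound $n-1-d(v^\star)$ on the last sum yields exactly $d(v^\star)(n-1-d(v^\star))\le n+2c-2$. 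The endpoint check is also right: for $n\ge 2c+5\ge 7$ one has $3(n-4)\ge 2(n-3)$, so the minimum of the concave quadratic over $\{3,\dots,n-3\}$ is $2(n-3)$, and $2(n-3)>n+2c-2$ is precisely $n\ge 2c+5$. The small-degree case $d(v^\star)\in\{1,2\}$ gives $d(v^\star)+m(v^\star)\le 2+(n-3)=n-1$, contradicting the strict inequality required by $q_1>n-1$. No recourse to the sharper edge-based bound is needed.
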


\begin{lemma}[\cite{AHL11}]
Let $G$ be a connected graph on $n \geq 7$ vertices. Then
\begin{itemize}
\item[(i)] $3-\frac{2.5}{n} < q_2 < 3$ if and only if $G$ is a firefly with one triangle.
\item[(ii)] $q_2=3$ if and only if $G$ is a firefly and has at least two triangles.
\end{itemize}
\label{teo_hansen}
\end{lemma}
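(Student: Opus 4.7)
My plan is to split each biconditional into an easy direction (direct computation on the fireflies) and a hard structural converse, using throughout the monotonicity fact that $q_i(H)\le q_i(G)$ for every induced subgraph $H\subseteq G$; this holds because the principal submatrix of $Q(G)$ on $V(H)$ agrees with $Q(H)$ off the diagonal and dominates it on the diagonal, so Weyl together with Cauchy interlacing applies.

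For the easy direction of (ii), given $G=F_{r,s,t}$ with $r\ge 2$, I would exhibit the vector $x$ which is $+1$ on the two non-central vertices of one triangle, $-1$ on the two non-central vertices of another triangle, and $0$ elsewhere; a direct check gives $Q(G)x=3x$, and since $x$ is orthogonal to the positive Perron eigenvector this forces $q_2(G)\ge 3$. For the matching upper bound $q_2(G)\le 3$, I would block-diagonalize $Q(G)-3I$ using the symmetries that swap the two non-central vertices of each triangle and the two non-central vertices of each pendant path of length two; these peel off the antisymmetric eigenvalues (all lying below $3$) and leave a quotient matrix of order $r+s+t+1$ which one shows has exactly one eigenvalue above $3$. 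For (i), the same quotient analysis applied to $F_{1,s,t}$ gives $q_2<3$ (since the triangle no longer yields a $+/-$ pair of eigenvectors), and a Taylor expansion of the relevant root of the quotient characteristic polynomial in $1/n$ yields the quantitative lower bound $q_2>3-2.5/n$.

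For the hard converse, assume $G$ is connected with $n\ge 7$ and $q_2(G)\le 3$; the goal is to force $G$ to be a firefly. The plan is to compile a list of small induced subgraphs $H$ with $q_2(H)>3$ which, by the monotonicity above, are forbidden in $G$: among them, two vertex-disjoint triangles (already $q_2=4$), sufficiently long paths (since $q_2(P_k)\to 4$ via the closed-form bipartite spectrum of $Q(P_k)$), $K_{1,3}$ with pendants grafted onto more than one arm, and a handful of small unicyclic configurations checked by direct computation. These forbidden patterns force all triangles of $G$ to share a common vertex $v$, force the diameter to be at most four through $v$, and force the components of $G-v$ to be only $K_1$, $K_2$, or $P_2$; this is precisely the definition of a firefly $F_{r,s,t}$ with central vertex $v$, and cases (i) and (ii) are then distinguished by the computation of the previous paragraph. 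The main obstacle is compiling the complete forbidden-subgraph list and carrying out the case analysis that collapses a $q_2\le 3$ graph onto the firefly structure (the hypothesis $n\ge 7$ is used exactly to discard small exceptional graphs that would otherwise satisfy the spectral bound without being fireflies); the only other delicate point is extracting the explicit constant $2.5$ in (i), which requires a tight asymptotic expansion of the second root of the quotient characteristic polynomial rather than a qualitative estimate.
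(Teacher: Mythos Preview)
The paper does not prove this lemma at all: it is quoted verbatim from Aouchiche, Hansen and Lucas \cite{AHL11} and used as a black box in Sections~2 and~3. So there is no ``paper's own proof'' to compare your proposal against; any comparison would have to be with the original source, not with the present paper.

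That said, your outline is a plausible reconstruction of how such a characterization is typically obtained, and the ingredients you list (quotient/equitable-partition computation on the firefly side, induced-subgraph monotonicity $q_i(H)\le q_i(G)$ via Cauchy interlacing plus the diagonal shift, and a forbidden-subgraph catalogue for the converse) are the right ones. A few places where your sketch would need real work before it becomes a proof: (a) the forbidden list you name is incomplete --- ruling out two \emph{vertex-disjoint induced} triangles does not by itself force all triangles to share a vertex, since two triangles in $G$ may have edges between them; one needs several more small configurations; (b) the claim that the quotient matrix of order $r+s+t+1$ has exactly one eigenvalue above $3$ is the crux of the upper bound $q_2\le 3$ and requires an explicit argument, not just an appeal to symmetry; (c) the constant $2.5$ in (i) is not obtained by a Taylor expansion alone --- one needs a uniform bound valid for all $n\ge 7$, which in the original reference comes from a careful sign analysis of the characteristic polynomial at $3-2.5/n$. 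These are exactly the points you flag as ``the main obstacle'' and ``delicate'', so you have correctly identified where the difficulty lies, but the proposal as written is a plan rather than a proof.
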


\begin{lemma}[\cite{CRS07}]
Let $G$ be a connected graph on $n \geq 2$ vertices. For $q\geq r \geq 1$, consider the graphs $G_{q,r}$ and $G_{q+1,r-1}.$ Then, $$q_{1}(G_{q,r}) > q_{1}(G_{q+1,r-1}).$$
\label{grafting}
\end{lemma}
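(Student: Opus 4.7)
The plan is to apply the Rayleigh--Ritz characterisation $q_{1}(H)=\max_{\|y\|=1}y^{T}Q(H)y$ by taking the Perron eigenvector of one graph as a test vector for the other. Since $G$ is connected on $n\geq 2$ vertices, both $G_{q,r}$ and $G_{q+1,r-1}$ are connected on the common vertex set, and by Perron--Frobenius each signless Laplacian admits a simple, strictly positive Perron eigenvector. I would let $x>0$ be the unit Perron eigenvector of $Q(G_{q+1,r-1})$ with eigenvalue $\lambda=q_{1}(G_{q+1,r-1})$.

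The two edge sets differ exactly in $u_{1}u_{2}$ (present only in $G_{q,r}$) and $v_{1}u_{1}$ (present only in $G_{q+1,r-1}$). Using the identity $y^{T}Q(H)y=\sum_{ij\in E(H)}(y_{i}+y_{j})^{2}$, I would obtain
\begin{equation*}
q_{1}(G_{q,r})\geq x^{T}Q(G_{q,r})x=\lambda+(x_{u_{1}}+x_{u_{2}})^{2}-(x_{v_{1}}+x_{u_{1}})^{2}.
\end{equation*}
Thus the proof reduces to the componentwise claim $x_{u_{2}}\geq x_{v_{1}}$: this makes the bracketed difference nonnegative, and strictness of $q_{1}(G_{q,r})>q_{1}(G_{q+1,r-1})$ then comes for free, because $x$ cannot simultaneously be a Perron eigenvector of $Q(G_{q,r})$ and $Q(G_{q+1,r-1})$ for the same eigenvalue. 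Indeed, the rank-two matrix $Q(G_{q,r})-Q(G_{q+1,r-1})=(e_{u_{1}}+e_{u_{2}})(e_{u_{1}}+e_{u_{2}})^{T}-(e_{v_{1}}+e_{u_{1}})(e_{v_{1}}+e_{u_{1}})^{T}$ applied to $x$ has coefficient $x_{u_{1}}+x_{u_{2}}>0$ at the coordinate $u_{2}$, so $x$ is not an eigenvector of $Q(G_{q,r})$ at $\lambda$, forcing $q_{1}(G_{q,r})>x^{T}Q(G_{q,r})x$.

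The heart of the argument is therefore the inequality $x_{u_{2}}\geq x_{v_{1}}$ in the Perron vector of $G_{q+1,r-1}$. Propagating the eigenvalue equation along a pendant path from $v$ (pendant condition $(\lambda-1)x_{z}=x_{z'}$ and internal three-term condition $x_{z'}+x_{z''}=(\lambda-2)x_{z}$) produces the Chebyshev-like polynomials $p_{0}(\lambda)=1$, $p_{1}(\lambda)=\lambda-1$, $p_{m+1}(\lambda)=(\lambda-2)p_{m}(\lambda)-p_{m-1}(\lambda)$, such that $x_{v}=p_{\ell}(\lambda)\,x_{\text{pendant}}$ for any pendant path of length $\ell$ from $v$. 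Applied to the two pendant paths of $G_{q+1,r-1}$ (lengths $q+1$ and $r-1$, ending at $u_{1}$ and $u_{2}$, respectively), this yields $x_{v}=p_{q+1}(\lambda)x_{u_{1}}=p_{r-1}(\lambda)x_{u_{2}}$ and $x_{v_{1}}=p_{1}(\lambda)x_{u_{1}}$, whence
\begin{equation*}
\frac{x_{u_{2}}}{x_{v_{1}}}=\frac{p_{q+1}(\lambda)}{p_{1}(\lambda)\,p_{r-1}(\lambda)},
\end{equation*}
and the inequality becomes the polynomial statement $p_{q+1}(\lambda)\geq p_{1}(\lambda)p_{r-1}(\lambda)$.

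The main obstacle is establishing this polynomial inequality at the unspecified value $\lambda=q_{1}(G_{q+1,r-1})$. My plan is to argue by induction on $r$, using the recurrence $p_{q+1}=(\lambda-2)p_{q}-p_{q-1}$ to reduce to the analogous inequality for $(q-1,r-1)$, and bootstrap the base case $r=1$ (where $u_{2}=v$ and the inequality is just $x_{v}\geq x_{v_{1}}$) from the monotonicity of the Perron vector along a pendant path of a connected graph---which itself follows from the recurrence once $\lambda$ is large enough. The hypothesis $n(G)\geq 2$ enters precisely here: it guarantees that $v$ has a neighbour in $G$ and therefore yields the lower bound $\lambda\geq 2+\sqrt{2}$ (for instance via $q_{1}(H)\geq\tfrac{1}{2}(d(u)+d(v))+\sqrt{\tfrac{1}{4}(d(u)-d(v))^{2}+1}$ applied to an edge of $G_{q+1,r-1}$ incident to $v$), which is the threshold at which $p_{m+1}/p_{m}$ stays above $1$ and the polynomial inequality goes through cleanly.
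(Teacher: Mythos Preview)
The paper does not prove this lemma at all: it is quoted from \cite{CRS07} and used as a black box in the proof of Lemma~\ref{Fn21}. So there is no ``paper's own proof'' to compare against, and I will simply comment on the soundness of your attempt.

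Your overall architecture is the standard one and is fine: take the Perron vector $x$ of $Q(G_{q+1,r-1})$ as a Rayleigh test vector for $Q(G_{q,r})$, reduce to the componentwise inequality $x_{u_2}\geq x_{v_1}$, and recover strictness from the fact that the rank-two difference $Q(G_{q,r})-Q(G_{q+1,r-1})$ does not annihilate $x$. Reducing $x_{u_2}\geq x_{v_1}$ to the polynomial statement $p_{q+1}(\lambda)\geq p_1(\lambda)\,p_{r-1}(\lambda)$ via the path recurrence is also correct.

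The genuine gap is in your proposed proof of that polynomial inequality. You claim that $\lambda\geq 2+\sqrt{2}$ is ``the threshold at which $p_{m+1}/p_m$ stays above $1$ and the polynomial inequality goes through cleanly.'' Both halves of this are false. First, monotonicity of $(p_m)$ does \emph{not} hold from $2+\sqrt{2}$ on: at $\lambda=2+\sqrt{2}$ one computes $p_0=1$, $p_1=p_2=1+\sqrt{2}$, $p_3=1<p_2$; in fact $p_{m+1}\geq p_m$ for all $m$ requires $\lambda\geq 4$. Second, even granting monotonicity, it does not imply your inequality: at $\lambda=4$ one has $p_m=2m+1$, and with $q=r=2$ you would need $p_3\geq p_1p_1$, i.e.\ $7\geq 9$, which fails. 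Thus no uniform lower bound on $\lambda$ can, by itself, force $p_{q+1}(\lambda)\geq p_1(\lambda)p_{r-1}(\lambda)$ for all $q\geq r$; your inductive plan ``reduce $(q,r)$ to $(q-1,r-1)$ via the recurrence'' cannot close because the recurrence $p_{q+1}=(\lambda-2)p_q-p_{q-1}$ introduces a term with the wrong sign.

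The inequality $x_{u_2}\geq x_{v_1}$ \emph{is} true, but its proof must use that $\lambda$ is exactly $q_1(G_{q+1,r-1})$ for the specific graph at hand (in particular, the eigenvector equation at $v$ and the presence of $G$ beyond $v$ are doing real work), not merely that $\lambda$ exceeds a fixed threshold. The argument in \cite{CRS07} sidesteps this by working with characteristic polynomials of the $Q$-matrix rather than eigenvector components: one expresses $\det(xI-Q(G_{q,r}))$ via the pendant-path recurrence and shows directly that $\det(xI-Q(G_{q+1,r-1}))-\det(xI-Q(G_{q,r}))$ has a definite sign for $x\geq q_1(G_{q,r})$. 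If you want to stay with the eigenvector route, you will need a sharper comparison that exploits the full eigenvector equation, not just a scalar bound on~$\lambda$.
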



\section{Main results}\label{section3}

In this section, we present the proofs of facts (A) and (B) presented in the introduction. In order to prove fact (A), we firstly present Lemma \ref{sn+}. From this point we will use $F_{1,n-3,0}$ to denote the graph $S_{n}^{+}$ since they are isomorphic.

\begin{lemma} \label{sn+} Let $G$ be isomorphic to $F_{1,n-3,0}$ such that $n \ge 7$. Then $$e(G)+3 -\frac{2.5}{n}< S_2(G) < e(G)+3.$$
\end{lemma}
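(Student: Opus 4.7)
My plan is to compute the signless Laplacian spectrum of $G = F_{1,n-3,0}$ explicitly by exploiting its symmetry. Label the apex vertex by $v_0$, the two triangle vertices by $v_1,v_2$, and the remaining pendants by $v_3,\ldots,v_{n-1}$. The tripartition $\pi=\{\{v_0\},\{v_1,v_2\},\{v_3,\ldots,v_{n-1}\}\}$ is equitable for $Q(G)$, and its quotient matrix is
$$B = \left(\begin{array}{ccc} n-1 & 2 & n-3 \\ 1 & 3 & 0 \\ 1 & 0 & 1 \end{array}\right),$$
with characteristic polynomial $p(\lambda) = \lambda^3 - (n+3)\lambda^2 + 3n\lambda - 4$. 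A short verification shows that every vector $x$ orthogonal to the column space of the characteristic matrix of $\pi$ (equivalently, $x(v_0)=0$, $x(v_1)+x(v_2)=0$, and $\sum_{i\ge 3}x(v_i)=0$) satisfies $Q(G)x = x$; hence $1$ is an eigenvalue of $Q(G)$ with multiplicity $n-3$, and the remaining three eigenvalues of $Q(G)$ are the three roots of $p$.

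Next I would locate the three roots of $p$ by sign tracing. For $n\ge 7$ one has
$$p(0) = -4 < 0, \quad p(1) = 2n-6 > 0, \quad p(n) = -4 < 0, \quad p(n+1) = (n-3)(n+2) > 0,$$
so $p$ has exactly one root in each of $(0,1)$, $(1,n)$, and $(n,n+1)$. Combined with the $(n-3)$-fold eigenvalue $1$, this identifies $q_1$ and $q_2$ with the two largest roots of $p$ and the smallest eigenvalue $q_n$ with the remaining root. Since the roots of $p$ sum to $n+3$,
$$S_2(G) = q_1 + q_2 = (n+3) - q_n = e(G) + 3 - q_n,$$
and the lemma reduces to the two-sided bound $0 < q_n < 2.5/n$.

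Positivity $q_n > 0$ is immediate, since $G$ is connected and contains the triangle $v_0 v_1 v_2$, so $Q(G)$ is positive definite. For the upper estimate I evaluate
$$p\left(\frac{2.5}{n}\right) = 3.5 - \frac{6.25}{n} - \frac{18.75}{n^2} + \frac{15.625}{n^3},$$
which is strictly positive for every $n\ge 7$ (already $3.5 - 6.25/7 - 18.75/49 > 2.2$). Together with $p(0) < 0$ this places the smallest root of $p$ in $(0, 2.5/n)$, proving $q_n < 2.5/n$ and hence the lemma. The only routine work is the determinant expansion giving $p(\lambda)$ and this final scalar inequality; I expect the latter to be the main piece of bookkeeping, but neither step presents a conceptual obstacle.
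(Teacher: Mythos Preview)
Your argument is correct. Both you and the paper reduce $Q(G)$ via symmetry to the same cubic $p(\lambda)=\lambda^3-(n+3)\lambda^2+3n\lambda-4$, with the remaining $n-3$ eigenvalues equal to $1$; the equitable-partition formulation you use is equivalent to the paper's identification of eigenvectors constant on blocks.

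The difference lies in the endgame. The paper bounds $q_1$ and $q_2$ separately by evaluating the cubic at $3-\tfrac{2.5}{n}$, $3-\tfrac{1}{n}$, $n$, and $n+\tfrac{1}{n}$ (invoking \cite{AHL11} for one sign), and then adds the two two-sided estimates. You instead observe that the three roots of $p$ sum to $n+3=e(G)+3$, so $S_2(G)=e(G)+3-q_n$, and it suffices to show $0<q_n<2.5/n$; the lower bound comes from non-bipartiteness, the upper bound from the single evaluation $p(2.5/n)>0$. Your route is a little more economical for the stated inequality and avoids the external citation, while the paper's route additionally yields the individual bounds $n<q_1<n+\tfrac1n$ and $3-\tfrac{2.5}{n}<q_2<3-\tfrac1n$, which may be of independent interest.
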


\begin{proof}
Consider a graph $G$ isomorphic to $F_{1,n-3,0}$. The matrix $Q(G)$ can be written as

$ Q(G)=\left[ \begin{array}{c|c|ccc}
   {I} + {J} & {1}  &     & {0} & \\
\hline                     {1^T} & n-1 &     & {1} & \\
\hline                          &             &  &            &  \\
         {0^T}             & {1^T}  &    &   {I}   \\

\end{array} \right],$
where the diagonal blocks are of orders 2, 1 and $n-3$, respectively. For each $i= 1, \ldots, n,$ let $e_i$ denote the ith standard unit basis vector. We find that $e_1-e_2$ and $e_4-e_j$, $5 \leq j \leq n$ are eigenvectors for $Q(G)$ corresponding to the eigenvalue $1$. Consequently, we see that $Q(G)$ has $1$ as an eigenvalue of multiplicity at least $n-3$. Further, since $Q(G)$ has an orthogonal basis of eigenvectors, it follows that there are remaining eigenvectors of $Q(G)$ of the form $\left[\begin{array}{c} \alpha {\bf{1}}\\ \hline \beta  \\ \hline \gamma {\bf{1}} \end{array}\right].$ We then deduce that the eigenvalues of the $3 \times 3$ matrix $M= \left[ \begin{array}{ccc} 3 &1&0\\2&n-1&n-3\\ 0&1&1\end{array}\right]$ comprise the remaining three eigenvalues of $Q(G)$ that are the roots of polynomial $\Psi(x) = x^3 -(n+3)x^2 +3n x-4$. As $\Psi$ is a continuous function in $\mathbb{R}$ and $\Psi(0)= -4 <0$, $\Psi(1)= 2n-6 > 0,$ from \cite{AHL11}, $\Psi(3-\frac{2.5}{n}) > 0,$ $\Psi(3-\frac{1}{n})= -1 - \frac{1}{n^3}+\frac{6}{n^2}-\frac{10}{n} < 0,$ $\Psi(n)=-4<0,$ and  for $ n \ge 7$,  $\Psi(n+ \frac{1}{n}) = -7+\frac{1}{n^3}-\frac{3}{n^2}+\frac{2}{n}+n> 0$, so $3-\frac{2.5}{n} < q_2(G) < 3-\frac{1}{n}$ and $ n < q_1(G) < n+ \frac{1}{n}$ for $ n \ge 7$. Then $e(G)+3 -\frac{2.5}{n} < S_2(G) < e(G)+3$.
\end{proof}

From Lemma \ref{sn+}, one can easily see that function $f(F_{1,n-3,0})$ converges to zero when $n$ goes to infinity. To complete the prove of the statement (A) we need to show that $F_{1,n-3,0}$ is the only graph such that inequality (\ref{ineq1}) is asymptotically tight within the firefly graphs. The prove follows from Lemmas \ref{Hn}, \ref{Fn22} and \ref{Fn21}.


\begin{lemma} \label{Hn}
Let $G$ be isomorphic to $F_{1,n-5,1}$ such that $n\ge 9$. Then $$e(G)+2 - \frac{0.8}{\ln n} < S_2(G) < e(G)+2.$$
\end{lemma}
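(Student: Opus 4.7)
The plan is to follow the blueprint of Lemma \ref{sn+}: exploit the symmetries of $G=F_{1,n-5,1}$ to reduce $Q(G)$ to a small matrix and then locate $q_1$ and $q_2$ by sign analysis of its characteristic polynomial. Label the central vertex $v$, the triangle vertices $u_1,u_2$, the pendant-path vertices $w_1$ (adjacent to $v$) and $w_2$, and the pendant neighbors of $v$ as $p_1,\ldots,p_{n-5}$. The swap symmetry of $u_1,u_2$ produces the eigenvector $e_{u_1}-e_{u_2}$ for eigenvalue $1$, and the symmetry among the pendants produces $e_{p_1}-e_{p_j}$, $2\le j\le n-5$, also for eigenvalue $1$, giving $n-5$ linearly independent eigenvectors in total. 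The orthogonal complement is the $5$-dimensional subspace spanned by $e_{u_1}+e_{u_2}$, $e_v$, $e_{w_1}$, $e_{w_2}$ and $\sum_{i}e_{p_i}$, on which $Q(G)$ acts as
\[
M=\left(\begin{array}{ccccc} 3 & 1 & 0 & 0 & 0 \\ 2 & n-2 & 1 & 0 & n-5 \\ 0 & 1 & 2 & 1 & 0 \\ 0 & 0 & 1 & 1 & 0 \\ 0 & 1 & 0 & 0 & 1 \end{array}\right),
\]
with characteristic polynomial
\[
\Psi(\lambda)=\lambda^{5}-(n+5)\lambda^{4}+(6n+4)\lambda^{3}-(10n-2)\lambda^{2}+(3n+12)\lambda-4,
\]
so that $q_1$ and $q_2$ are its two largest roots.

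Next I would localize $q_1$ and $q_2$ by evaluating $\Psi$ at strategic points. Lemma \ref{teo_hansen} yields $3-2.5/n<q_2<3$, consistent with $\Psi(3)=-4<0$. The explicit computations $\Psi(n-1)=-5n^{2}+25n-24<0$ and $\Psi(n)=n^{4}-6n^{3}+5n^{2}+12n-4>0$, both valid for $n\ge 9$, place $q_1\in(n-1,n)$. Adding these localizations gives the easy lower half: $S_2(G)>n+2-2.5/n$, and since for $n\ge 9$ one has $2.5/n<0.8/\ln n$ (equivalently $\ln n<0.32\,n$, verified at $n=9$ and preserved for larger $n$ because $\ln n - 0.32n$ is decreasing there), the bound $S_2(G)>e(G)+2-0.8/\ln n$ follows immediately.

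The strict upper bound $S_2(G)<e(G)+2$ is the technical heart of the lemma. The naive combination $q_1<n$ with $q_2<3$ only gives $S_2<n+3$, and the fact that $q_1>n-1$ blocks a direct argument using only $q_2<3$; one must exploit that $3-q_2$ is of order $1/n$ while $q_1-(n-1)$ is of order $1/n^{2}$. Concretely, my plan is to sharpen both sides: a direct Taylor expansion yields $\Psi(3-1/n)=-1-7/n+O(1/n^{2})<0$ for $n\ge 9$, so $q_2<3-1/n$; and, using the Taylor data $\Psi(n-1)=-5n^{2}+25n-24$ together with $\Psi'(n-1)=n^{4}-10n^{3}+34n^{2}-55n+45$, one verifies $\Psi(n-1+C/n^{2})>0$ for an explicit constant $C$, yielding $q_1<n-1+C/n^{2}$. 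Summing, $S_2(G)<n+2+C/n^{2}-1/n<n+2$ whenever $n>C$, and the finitely many remaining cases $9\le n\le C$ are checked by hand. The principal obstacle is controlling the higher-order Taylor terms of $\Psi$ near $\lambda=n-1$ uniformly in $n\ge 9$: at the boundary $n=9$ the gap $q_1-(n-1)\approx 0.13$ is not yet deep in the asymptotic regime, forcing $C$ to be chosen generously and the small-$n$ verification to be done numerically.
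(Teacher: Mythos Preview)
Your reduction to the $5\times 5$ matrix $M$ and the characteristic polynomial $\Psi$ is identical to the paper's. The difference lies in how you localize $q_1$ and $q_2$.

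For the lower bound you invoke Lemma~\ref{teo_hansen} to get $q_2>3-2.5/n$ and then argue $2.5/n<0.8/\ln n$; the paper instead evaluates $\Psi(3-0.8/\ln n)>0$ directly. Both routes are valid; yours is arguably simpler since it recycles an existing lemma.

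For the upper bound your asymmetric plan ($q_2<3-1/n$ together with $q_1<n-1+C/n^{2}$, plus hand-checking small $n$) would work, but the paper avoids the whole difficulty you anticipate by choosing a \emph{matched} offset: it shows $\Psi\bigl(3-\tfrac{5}{4n}\bigr)<0$ and $\Psi\bigl(n-1+\tfrac{5}{4n}\bigr)>0$ for all $n\ge 9$, so that $q_2<3-\tfrac{5}{4n}$ and $q_1<n-1+\tfrac{5}{4n}$, and the two offsets cancel exactly to give $S_2(G)<n+2=e(G)+2$ with no residual term and no case analysis. The point you were missing is that you do not need the sharpest possible bound on $q_1$; you only need one that exactly complements the bound on $q_2$, and $5/(4n)$ does this cleanly.
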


\begin{proof}
Consider a graph $G$ isomorphic to $F_{1,n-5,1}.$ The matrix $Q(G)$ can be written as

$Q(G)=\left[ \begin{array}{ccccccccc} 2&1&1&0&0&0&\ldots
&&0 \\1&2&1&0&0&0&\ldots&&0\\1&1&n-2&1&0&1&\ldots&&1\\  0&0&1&2 &1 &0& \ldots &&0\\ 0&0& 0& 1&1& 0&\ldots &&0\\
0&0&1&0&0&1&\ldots &&0\\ \vdots& \vdots& \vdots&\vdots&\vdots&& \ddots&&\\ 0&0&1&0&0&&\ldots&&1
\end{array}\right].$
For each $i= 1, \ldots, n,$ let $e_i$ denote the ith standard unit basis vector. We find that $e_6 - e_j$, $7 \leq j \leq n$ and $e_1 - e_2$ are eigenvectors for $Q(G)$ corresponding to eigenvalue $1$. Consequently, we see that $Q(G)$ has $1$ as an eigenvalue of multiplicity at least $n-5$. Further, since $Q(G)$ has an orthogonal basis of eigenvectors, it follows that there are remaining eigenvectors of $Q(G)$ of the form $\left[\begin{array}{c} \alpha {\bf{1}} \\ \hline \beta  \\ \hline \gamma \\ \hline \xi \\ \hline \varpi {\bf{1}} \end{array}\right].$ We then deduce that the eigenvalues of $5 \times 5$ matrix $M= \left[ \begin{array}{ccccc} 3 &1&0&0&0 \\2&n-2&1&0&n-5\\ 0&1&2&1&0 \\ 0&0&1&1&0 \\ 0&1&0&0&1\end{array}\right]$
comprise the remaining five eigenvalues of $Q(G)$ that are the roots the polynomial $\Psi(x)=x^5 - (n + 5) x^4 + (6 n + 4) x^3 - (10 n - 2) x^2 + (3 n + 12) x - 4$. As $\Psi$ is a continuous function in $\mathbb{R}$, there are three roots of $\Psi(x)$ in the intervals $[0,0.3]$, $[0.3,1]$ and $[1,2.7]$. For the other two, as
\begin{eqnarray*}
\Psi\left(3 - \frac{0.8}{\ln n}\right)&=&
    -4+(12+3 n) \left(3-\frac{0.8}{\ln n}\right)-(-2+10 n) \left(3-\frac{0.8}{\ln n}\right)^2\\
    &&+(4+6 n) \left(3-\frac{0.8}{\ln n}\right)^3-(5+n) \left(3-\frac{0.8}{\ln n}\right)^4+\left(3-\frac{0.8}{\ln n}\right)^5\\
    & >&0,
\end{eqnarray*}
\begin{eqnarray*}
\Psi\left(3 -\frac{5}{4n}\right)&=&
    -\frac{1}{1024 n^5}\left(3125 - 25000 n + 70500 n^2 - 72800 n^3 + 12160 n^4 + 256 n^5\right)\\
    & <&0,
\end{eqnarray*}
$$\Psi (n-1)= -24 + 25 n - 5 n^2 <0$$ and
\begin{eqnarray*}
\Psi (n - 1 + \frac{5}{4 n})&=&
    \frac{1}{1024 n^5} \left(3125 - 25000 n + 78000 n^2 - 140000 n^3 + 178400 n^4 \right.\\
    &&\left. - 142976 n^5 + 75520 n^6 - 17920 n^7 + 1280 n^8\right)\\
 &>& 0
\end{eqnarray*}
so, $ 3-\frac{0.8}{\ln n} < q_2(G) < 3-\frac{5}{4n}$ and $n-1 < q_1(G) < n - 1 + \frac{5}{4 n}$. Then, $e(G) + 2 - \frac{0.8}{\ln n} < S_2(G) < e(G) + 2.$
\end{proof}

From Lemma \ref{Hn}, one can easily see that function $f(F_{1,n-5,1})$ converges to 1 when $n$ goes to infinity.


\begin{lemma}\label{Fn22}
Let $G$ be isomorphic to $F_{1,s,t}$ a firefly graph such that $s \geq 1$ and $t \geq 2.$ Then $S_2(G) < e(G)+2.$
\end{lemma}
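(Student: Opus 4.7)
The plan is to deduce $S_2(G) < e(G) + 2$ by combining an upper bound on $q_1(G)$ from Lemma \ref{teo_liu} with an upper bound on $q_2(G)$ from Lemma \ref{teo_hansen}. The key observation is that for $G = F_{1,s,t}$ we have $e(G) + 2 = n + 2$, so it suffices to show $q_1(G) \leq n-1$ together with the strict inequality $q_2(G) < 3$.

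First I would record the elementary parameters of $G = F_{1,s,t}$. By the definition of a firefly, $G$ has $n = 2(1) + s + 2t + 1 = s + 2t + 3$ vertices and $e(G) = 3 + s + 2t = n$ edges; being connected with $e(G) = n$, it is unicyclic ($c = 1$). The maximum degree, attained at the common vertex, is $\Delta = 2 + s + t$. In particular, $e(G) + 2 = n + 2$.

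Next I would verify the hypotheses of Lemma \ref{teo_liu} with $c = 1$. The condition $\Delta \leq n - 3$ becomes $2 + s + t \leq s + 2t$, i.e., $t \geq 2$, which is exactly the given assumption. The condition $n \geq 2c + 5 = 7$ follows from $s \geq 1$ and $t \geq 2$, which force $n \geq 8$. Hence Lemma \ref{teo_liu} yields $q_1(G) \leq n - 1$. Then, since $F_{1,s,t}$ is a firefly graph with exactly one triangle and satisfies $n \geq 7$, Lemma \ref{teo_hansen}(i) gives the strict upper bound $q_2(G) < 3$. Adding the two estimates,
$$S_2(G) = q_1(G) + q_2(G) < (n-1) + 3 = n + 2 = e(G) + 2,$$
which is the claim.

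There is essentially no technical obstacle: the hypothesis $t \geq 2$ is precisely calibrated to keep the maximum degree at most $n-3$, allowing Lemma \ref{teo_liu} to apply, while the hypothesis $s \geq 1$ only serves to guarantee that $n$ is large enough for both Lemma \ref{teo_liu} and Lemma \ref{teo_hansen} to be in force. The proof is therefore just bookkeeping built on top of the two earlier lemmas.
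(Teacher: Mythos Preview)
Your proof is correct and follows exactly the same route as the paper: bound $q_1$ via Lemma~\ref{teo_liu} and $q_2$ via Lemma~\ref{teo_hansen}(i), then add. You simply make explicit the checks (that $t\ge 2$ gives $\Delta\le n-3$ and that $s\ge 1$, $t\ge 2$ force $n\ge 7$) which the paper leaves to the reader.
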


\begin{proof}
From Lemma \ref{teo_liu}, we have $q_1 \leq s+2t+2$ and from Lemma \ref{teo_hansen}, $q_2 < 3$.
So, $S_2(G) < s+2t+5 = e(G)+2$.
\end{proof}

From Lemma \ref{Fn22}, follows that function $f(F_{1,s,t}) > 1$ when $s \geq 1$ and $t \geq 2.$



\begin{lemma} \label{Fn2}
For  $2r+s+1\ge 6$ and $r \ge 2$, then $$2r+s+1 < q_1(F_{r,s,0}) < 2r + s+\frac{3}{2}.$$
\end{lemma}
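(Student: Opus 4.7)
The plan is to exploit the symmetry of $F_{r,s,0}$ in order to reduce the computation of $q_1$ to the top eigenvalue of a $3\times 3$ matrix. Let $v$ denote the common vertex, $V_T$ the set of $2r$ triangle vertices, and $V_P$ the set of $s$ pendant vertices, and write $\Delta = 2r+s$. The partition $\pi = (\{v\},V_T,V_P)$ is equitable for $Q(F_{r,s,0})$: every $u \in V_T$ has degree $2$ with exactly one neighbour in $\{v\}$, one in $V_T$ (its triangle partner) and none in $V_P$, while every $w \in V_P$ has degree $1$ and has only $v$ as its neighbour. The corresponding quotient matrix is
\[
B \;=\; \begin{pmatrix} \Delta & 2r & s \\ 1 & 3 & 0 \\ 1 & 0 & 1 \end{pmatrix},
\]
and a direct expansion, collecting terms via $2r+s=\Delta$, gives
\[
f(q) \;:=\; \det(qI-B) \;=\; q^{3} - (\Delta+4)\,q^{2} + 3(\Delta+1)\,q - 4r.
\]

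Next I would argue that $q_1(F_{r,s,0})$ is precisely the largest root of $f$. The automorphism group of $F_{r,s,0}$ acts transitively on $V_T$ and on $V_P$, so the (unique up to scaling) positive Perron eigenvector of $Q(F_{r,s,0})$ is constant on every class of $\pi$ and therefore descends to a positive eigenvector of $B$. Consequently $q_1$ equals the Perron eigenvalue of $B$, which is the largest root of $f$.

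It remains to localize this root in $(\Delta+1,\Delta+3/2)$ by evaluating $f$ at the two endpoints. Short computations give
\[
f(\Delta+1) \;=\; -4r, \qquad f\!\left(\Delta + \tfrac{3}{2}\right) \;=\; \tfrac{1}{2}\!\left(\Delta^{2} - \tfrac{9}{4}\right) - 4r.
\]
The first expression is strictly negative since $r \geq 2$. For the second I would verify $\Delta^{2} > 8r + 9/4$ in two sub-cases: if $s \geq 1$ and $r \geq 2$, then $\Delta \geq 2r+1$, so $(2r-1)^{2} \geq 9 > 9/4$, which rearranges to $\Delta^{2} \geq (2r+1)^{2} > 8r + 9/4$; if $s = 0$, the standing hypothesis $2r+s+1 \geq 6$ forces $r \geq 3$, whence $\Delta^{2} = 4r^{2} \geq 36 > 8r + 9/4$. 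Since $f$ is a monic cubic that is negative at $\Delta+1$, positive at $\Delta+3/2$, and tends to $+\infty$, its largest real root must lie in the open interval $(\Delta+1,\Delta+3/2)$, which is the desired bound.

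\textbf{Expected main obstacle.} The only delicate step is ensuring $f(\Delta+3/2)>0$ in the boundary regime: the crude bound $\Delta \geq 2r$ is not strong enough at $(r,s)=(2,0)$, and the hypothesis $n = 2r+s+1 \geq 6$ is needed exactly to rule that single configuration out. Everything else is forced by the equitable partition, so the proof is essentially an endpoint sign-check on the cubic $f$.
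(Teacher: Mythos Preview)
Your approach is essentially the paper's: reduce via the equitable partition $(\{v\},V_T,V_P)$ to a $3\times 3$ quotient matrix, compute its characteristic polynomial, and check signs at $\Delta+1$ and $\Delta+\tfrac32$. Your endpoint computations are correct and more explicit than the paper's (which simply asserts the signs), and you correctly isolate the boundary configuration $(r,s)=(2,0)$ that the hypothesis $n\ge 6$ is there to exclude.

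There is one genuine slip. The sentence ``since $f$ is a monic cubic that is negative at $\Delta+1$, positive at $\Delta+\tfrac32$, and tends to $+\infty$, its largest real root must lie in $(\Delta+1,\Delta+\tfrac32)$'' is false as a general statement about cubics: for instance $(x-\Delta-1.2)(x-\Delta-2)(x-\Delta-3)$ satisfies all three sign conditions yet has its largest root at $\Delta+3$. Your sign data only forces an \emph{odd} number of roots above $\Delta+1$ and an \emph{even} number above $\Delta+\tfrac32$; you still need to rule out the pattern ``one root in the interval, two roots beyond''. One extra line suffices in your framework: the three roots of $f$ are eigenvalues of the positive semidefinite matrix $Q$, hence nonnegative, and they sum to $\operatorname{tr}B=\Delta+4$; two of them exceeding $\Delta+\tfrac32$ would force the third below $1-\Delta<0$. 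The paper closes this same gap differently, invoking the bound $q_2(G)\le n-2=\Delta-1$ to conclude that the root found in $(\Delta+1,\Delta+\tfrac32)$ can only be $q_1$.
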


\begin{proof}
The signless Laplacian matrix of the graph $F_{r,s,0}$ can be written as
$$ Q(F_{r,s,0})=\left[ \begin{array}{c|c|c}
   2r+s    & \mathbf{1}     & \mathbf{1}                    \\  \hline
   {1^T}  &  \mathbf{I}    & \mathbf{0}       \\ \hline
   {1^T}  & \mathbf{0}  &  \mathbf{B}   \\
\end{array} \right],$$
where the diagonal blocks are of orders $1,\; s$ and $2r,$ respectively and $B$ is a diagonal block matrix and each block has order $2$ of the type $I+J.$ For each $i=1, \ldots, 2r+s+1,$ let $e_i$
denote the $i$-th standard unit basis vector. We find that for each $j=3,\ldots, s+1, e_2-e_j$ is an eigenvector for $Q(F_{r,s,0})$ corresponding to eigenvalue $1$; also, for each $k=1,\ldots, r, e_{s+2k}-e_{s+2k+1}$ is an eigenvector for $Q(F_{r,s,0})$ corresponding to eigenvalue $1$. So, $1$ is an eigenvalue with multiplicity at least $r+s-1.$ Further, since  $Q(F_{r,s,0})$ has an orthogonal basis of eigenvectors, it follows that there are remaining eigenvectors of  $Q(F_{r,s,0})$ of the form $\left[\begin{array}{c} \gamma \\ \hline \alpha {\bf{1}}\\ \hline \beta {\bf{1}} \end{array}\right].$  We then deduce that the eigenvalues of the $3 \times 3$ matrix $$M= \left[ \begin{array}{ccc} 2r+s &s&2r\\1&1&0\\ 1&0&3\end{array}\right]$$ comprise the remaining three eigenvalues of $Q(F_{r,s,0})$. The eigenvalues of $M$ are the roots of the characteristic polynomial of $M$ given by $g(x) = -(x^3+(-s-2r-4)x^2 + (3s+6r+3)x-4r).$ See that $g(2r+s+1) >0$ and $g(2r+s+3/2) <0.$ Since $q_2 \leq n-2 = 2r+s-1$ and $q_1 \geq q_2,$ we get $$2r+s+1 < q_1 < 2r+s+\frac{3}{2}.$$
\end{proof}

\begin{lemma}\label{Fn21}
Let $G=F_{r,s,t}$ such that $r \ge 2,$ $t,s \geq 1$. Then $S_2(G) \leq e(G) + 2.5.$
\end{lemma}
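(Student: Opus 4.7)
The plan is to split $S_2(G)$ as $q_1(G)+q_2(G)$, identify $q_2(G)=3$ immediately via Lemma \ref{teo_hansen}(ii), and then reduce the bound on $q_1$ to the case of a firefly with no pendant $P_3$'s via Lemma \ref{grafting}, at which point Lemma \ref{Fn2} finishes the job. Concretely, since $r\geq 2$ and $n=2r+s+2t+1\geq 8>7$, Lemma \ref{teo_hansen}(ii) gives $q_2(G)=3$, so it suffices to establish $q_1(G) < 3r+s+2t-\frac{1}{2} = e(G)-\frac{1}{2}$.

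The key reduction is that replacing a pendant $P_3$ by two pendant edges strictly increases $q_1$. I would prove this one step at a time by fixing $H := F_{r,\,s+2k,\,t-k-1}$ (a connected firefly on at least six vertices for each $0\leq k\leq t-1$) and observing that attaching a $P_3$ at the central vertex produces $H_{2,0} = F_{r,\,s+2k,\,t-k}$, while attaching two copies of $P_2$ at the central vertex produces $H_{1,1} = F_{r,\,s+2(k+1),\,t-k-1}$. Lemma \ref{grafting} applied with $q=r=1$ then yields $q_1(F_{r,\,s+2k,\,t-k}) < q_1(F_{r,\,s+2(k+1),\,t-k-1})$, and chaining these inequalities for $k=0,1,\ldots,t-1$ gives $q_1(F_{r,s,t}) < q_1(F_{r,\,s+2t,\,0})$.

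Next I would apply Lemma \ref{Fn2} to $F_{r,\,s+2t,\,0}$: since $r\geq 2$ and $2r+(s+2t)+1\geq 8\geq 6$, we obtain $q_1(F_{r,\,s+2t,\,0}) < 2r+s+2t+\frac{3}{2}$. The hypothesis $r\geq 2$ is used a second time through the inequality $2r+\frac{3}{2}\leq 3r-\frac{1}{2}$, which delivers $q_1(G) < 3r+s+2t-\frac{1}{2}$. Combining with $q_2=3$ gives $S_2(G) < e(G)+\frac{5}{2}$, which is the required bound.

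The only slightly delicate point I anticipate is verifying, at every step of the iteration, that the base graph $H$ in the grafting construction is a connected graph on at least two vertices (so that Lemma \ref{grafting} applies) and that the two extensions $H_{2,0}$ and $H_{1,1}$ are indeed the claimed fireflies; both reduce to a routine check from the definition of $F_{r,s,t}$. The hypothesis $r\geq 2$ plays a genuinely essential role throughout: it is used to secure $q_2=3$, to permit Lemma \ref{Fn2}, and in the final arithmetic converting the $2r$-bound into a $3r$-bound.
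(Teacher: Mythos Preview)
Your argument is correct and follows essentially the same route as the paper: identify $q_2(G)=3$ via Lemma \ref{teo_hansen}(ii), use repeated grafting (Lemma \ref{grafting}) to reduce to $F_{r,\,s+2t,\,0}$, bound its $q_1$ by Lemma \ref{Fn2}, and then use $r\geq 2$ in the final arithmetic. Your write-up is in fact more careful than the paper's in spelling out the grafting iteration and in verifying the hypotheses of the cited lemmas at each step.
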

\begin{proof}
For firefly graphs $F_{r,s,t}$ such that $t \geq 1$, we can obtain any $F_{r,s,t}$ from grafting edges of the graph $F_{r,s,0}$. From  Lemma \ref{grafting} and Lemma \ref{Fn2} $q_1(F_{r,s,t}) < q_1(F_{r,s + 2t,0}) < 2r+s+2t +\frac{3}{2}.$ Also, by Lemma \ref{teo_hansen}, $q_2(F_{r,s,t})=3$ we get $ q_1+q_2 < 2r+s+2t+4.5.$ Observe that $e(F_{r,s+2t,0}) = 3r+s+2t$ and then $2r+s+2t+4.5 = e(F_{r,s+2t,0})+2.5+(2-r) \leq e(F_{r,s+2t,0})+2.5$ for $r \geq 2$. So, $S_2(G) \leq e(G) + 2.5.$
\end{proof}

From Lemma \ref{Fn21}, follows that function $f(F_{r,s,t}) \geq 0.5$ when $r \geq 2.$ The next proposition proves the statement (B) of the introduction.

\begin{proposition}
For $n \geq 9$ and $k \geq 2$, the function $f(F_{1,n-3,0})$ converges to zero faster than $f(K_{k}\vee \overline{K}_{n-k}).$
\end{proposition}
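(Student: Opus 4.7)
The strategy is to compare both quantities against the common threshold $2.5/n$. From Lemma \ref{sn+} we already have $f(F_{1,n-3,0})<\frac{2.5}{n}$, so it suffices to verify $f(K_k\vee\overline{K}_{n-k})>\frac{2.5}{n}$ for every $n\ge 9$ and every $k\ge 2$.

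First, I would compute the signless Laplacian spectrum of $G=K_k\vee\overline{K}_{n-k}$ using the equitable partition $(V(K_k),V(\overline{K}_{n-k}))$. The associated quotient matrix is $\left[\begin{array}{cc} n+k-2 & n-k \\ k & k \end{array}\right]$, whose two eigenvalues $\lambda_+\ge\lambda_-$ are the roots of $\lambda^2-(n+2k-2)\lambda+2k(k-1)=0$; vectors which sum to zero on each cell contribute the remaining eigenvalues $n-2$ (with multiplicity $k-1$) and $k$ (with multiplicity $n-k-1$). A direct size comparison then gives $q_1(G)=\lambda_+$ and $q_2(G)=n-2$ for all $n\ge 9$ and $2\le k\le n-1$.

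Using $\lambda_+\le n+2k-2$ (which follows from $\lambda_-\ge 0$ and $\lambda_++\lambda_-=n+2k-2$) together with $e(G)=k(k-1)/2+k(n-k)$ yields the lower bound
$$f(G)\ge (k-2)n-\frac{k^2+5k-14}{2}.$$
For $k\ge 3$, the right-hand side is concave in $k$ on $[3,n-1]$, so its minimum over this interval is attained at an endpoint. Checking $k=3$ gives $n-5\ge 4$ (since $n\ge 9$) and checking $k=n-1$ gives $(n-3)(n-6)/2\ge 9$, both far larger than $\frac{2.5}{n}$.

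The case $k=2$ is the bottleneck, because the crude estimate above degenerates to $f(G)>0$. I would instead use the exact identity
$$f(K_2\vee\overline{K}_{n-2})=\frac{n+2-\sqrt{(n+2)^2-16}}{2}=\frac{8}{n+2+\sqrt{(n+2)^2-16}}>\frac{4}{n+2},$$
where the last inequality follows by bounding $\sqrt{(n+2)^2-16}$ by $n+2$ in the denominator. Reducing $\frac{4}{n+2}>\frac{2.5}{n}$ to $1.5\,n>5$ shows that the desired inequality holds for every $n\ge 4$, and in particular for $n\ge 9$. The main difficulty is precisely this $k=2$ subcase: the generic estimate $\lambda_+\le n+2k-2$ loses the $\Theta(1/n)$ slack that one needs, so the square-root expression must be rationalized and handled explicitly.
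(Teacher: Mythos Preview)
Your argument is correct, and it establishes something strictly stronger than what the paper proves. The paper's proof is just three lines: it quotes $0<f(F_{1,n-3,0})<\tfrac{2.5}{n}$ from Lemma~\ref{sn+}, quotes the bound $0<f(K_k\vee\overline{K}_{n-k})<\tfrac{1}{\sqrt{n-k}}$ from Remark~8 of \cite{AOT13}, and then observes that $\tfrac{2.5}{n}<\tfrac{1}{\sqrt{n-k}}$ for $k\ge 2$. In other words, the paper only compares the \emph{upper bounds} on the two quantities and reads ``converges faster'' as a statement about rates. Your route is different: you compute the $Q$-spectrum of $K_k\vee\overline{K}_{n-k}$ explicitly via the equitable partition, identify $q_1=\lambda_+$ and $q_2=n-2$, and then produce a genuine \emph{lower} bound $f(K_k\vee\overline{K}_{n-k})>\tfrac{2.5}{n}$, yielding the pointwise inequality $f(F_{1,n-3,0})<f(K_k\vee\overline{K}_{n-k})$ for every $n\ge 9$ and $2\le k\le n-1$. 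This is self-contained (no appeal to \cite{AOT13}) and logically tighter, since comparing two upper bounds by itself does not order the underlying quantities. Your treatment also makes visible a structural fact the paper's shortcut hides: for $k\ge 3$ the crude estimate already gives $f(K_k\vee\overline{K}_{n-k})\ge n-5$, so these graphs do not even approach equality in (\ref{ineq1}); the only genuinely competing case is $k=2$, where your rationalization of $\tfrac{(n+2)-\sqrt{(n+2)^2-16}}{2}$ is exactly the right tool. The cost is length --- the paper's version is a one-line citation, yours is a full spectral computation --- but the gain is a sharper and more transparent conclusion.
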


\begin{proof}
From Lemma \ref{sn+}, we have $ 0<f(F_{1,n-3,0})<\frac{2.5}{n}$ and from Remark 8 of \cite{AOT13}, $0< f(K_{k}\vee \overline{K}_{n-k})< \frac{1}{\sqrt{n-k}}.$ One can see that $\frac{2.5}{n} < \frac{1}{\sqrt{n-k}}$ for $k \geq 2$ and it completes the proof.
\end{proof}

Therefore, we proved that inequality (\ref{ineq1}) is asymptotically tight for the graph $F_{1,n-3,0}$ within the firefly graphs on $n \geq 9$ vertices. Based on computational experiments we propose the following conjecture.

\begin{conjecture}
For any graph $G$ on $n \geq 9$ vertices, $f(F_{1,n-3,0})$ converges to zero faster than $f(G).$
\end{conjecture}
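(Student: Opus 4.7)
My plan is a case analysis on the structure of $G$, aimed at showing $f(G)>f(S_n^+)$ for every $G\not\cong S_n^+$ on $n\ge 9$ vertices. The firefly subcase is already complete: Lemmas \ref{sn+}, \ref{Hn}, \ref{Fn22} and \ref{Fn21} together show that $F_{1,n-3,0}=S_n^+$ is the unique firefly with $f$ of order $1/n$, all others satisfying $f\ge 0.5$ or $f=1-o(1)$. I would therefore fix $G$ connected and non-firefly, and by Lemma \ref{teo_hansen} split on whether $q_2(G)\le 3-2.5/n$ or $q_2(G)\ge 3$.

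In the regime $q_2(G)\le 3-2.5/n$, the trivial bound $q_1+q_2\le q_1+3-2.5/n$ yields $f(G)\ge e(G)-q_1(G)+2.5/n$. Standard spectral bounds such as $q_1\le \max_{uv\in E}(d_u+d_v)$, combined with estimates on $e(G)$ derived from the trace identity $\sum_i q_i=2e(G)$, should force $e(G)\ge q_1(G)$ outside an explicitly checkable finite list of exceptions, finishing this regime.

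In the regime $q_2(G)\ge 3$, which contains the complete split graphs $K_k\vee\overline{K}_{n-k}$, the analysis is more delicate, since even $K_2\vee\overline{K}_{n-2}$ has $f=4/(n+2)$, only a constant factor larger than $f(S_n^+)\le 2.5/n$. A direct block-matrix calculation in the spirit of Lemma \ref{sn+} gives $f$ exactly for each $K_k\vee\overline{K}_{n-k}$ and should reveal that these values are all at least $c/n$ for some absolute constant $c>2.5$. Graphs in this regime that are not complete split should be handled by combining the trace--norm identities $\sum q_i=2e(G)$ and $\sum q_i^2=\sum_v d_v^2+2e(G)$ with lower bounds on $q_3,\ldots,q_n$ coming from the degree sequence, so as to squeeze $q_1+q_2$ below $e(G)+3-c'/n$ for some $c'>2.5$.

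The main obstacle is a quantitative stability version of the Ashraf--Omidi--Tayfeh-Rezaie inequality (\ref{ineq1}) in a neighbourhood of $S_n^+$. A promising approach is to linearise $Q(G)$ around $Q(S_n^+)$ via perturbation theory: any graph close to $S_n^+$ in edit distance is obtained from $S_n^+$ by adding or removing a bounded number of edges, and the first-order change in $q_1+q_2-e(G)$ under such a perturbation can be computed explicitly using the eigenvectors of $Q(S_n^+)$ made available in the proof of Lemma \ref{sn+}. A careful sign and magnitude analysis of that first-order change, combined with the far-from-$S_n^+$ arguments above, would complete the proof; the hard point is that the perturbation must be controlled to order $1/n$ uniformly in the edge modification, which is precisely where existing spectral machinery is weakest.
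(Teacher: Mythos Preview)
The statement you are attempting to prove is presented in the paper as a \emph{conjecture}, not a theorem: the authors write ``Based on computational experiments we propose the following conjecture'' and offer no proof whatsoever. There is therefore no argument in the paper to compare your proposal against.

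As for the proposal itself, it is a plan rather than a proof, and you acknowledge as much with phrases such as ``should force'', ``should be handled'', and ``the hard point is''. A few concrete obstacles are worth flagging. In your first regime ($q_2(G)\le 3-2.5/n$) you reduce to showing $e(G)\ge q_1(G)$; but this inequality fails already for the star $K_{1,n-1}$, where $e=n-1$ and $q_1=n$, yet $f(K_{1,n-1})=1$. The slack you threw away by replacing $q_2$ with its upper bound $3-2.5/n$ is exactly what rescues such graphs, so the crude bound $f(G)\ge e(G)-q_1(G)+2.5/n$ is not the right tool here. In the second regime, the dichotomy you draw from Lemma~\ref{teo_hansen} gives $q_2>3$ (not $q_2\ge 3$) for non-fireflies, and the trace/degree-sequence machinery you invoke does not obviously control $q_1+q_2$ to within $O(1/n)$ of $e(G)+3$ uniformly over that class; indeed, the perturbation argument you sketch at the end is precisely where the difficulty lies, and you yourself note that existing spectral tools are ``weakest'' there. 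In short, the proposal identifies a reasonable case split but leaves each branch essentially open, which is consistent with the paper's own assessment that the statement remains conjectural.
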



\end{document}